\newtheorem{theorem}{Theorem}[section]
\newtheorem{lemma}[theorem]{Lemma}
\newtheorem{proposition}[theorem]{Proposition}
\newtheorem{claim}[theorem]{Claim}
\newtheorem{corollary}{Corollary}[theorem]
\theoremstyle{definition}
\newtheorem{definition}[theorem]{Definition}
\newtheorem{ex}{Example}[section]
\theoremstyle{remark}
\newtheorem{remark}[theorem]{Remark}
\numberwithin{equation}{section}
\newcommand \Z{\mathbb Z}
\newcommand \C{\mathbb C}
\newcommand \id{\mathrm{Id}}
\newcommand \mc{\mathcal}
\newcommand \mb{\mathbb}
\DeclareMathOperator{\Spec}{Spec}
\DeclareMathOperator{\Pic}{Pic}
\DeclareMathOperator{\rank}{rk}
\DeclareMathOperator{\Sym}{Sym}
\DeclareMathOperator{\Tot}{Tot}
\title{Spectral coverings without embeddings}
\author{Eric Boulter}
\address{Department of Mathematics and Statistics, University of Saskatchewan, SK, Canada~ S7N 5E6}
\email{eric.boulter@usask.ca}
\author{Steven Rayan}
\address{Centre for Quantum Topology and Its Applications (quanTA) and Department of Mathematics and Statistics, University of Saskatchewan, SK, Canada~ S7N 5E6}
\email{rayan@math.usask.ca}
\date{\today}
\begin{document}
\begin{abstract}
    In this article, we investigate a weakened version of the spectral correspondence for twisted Higgs bundles. Namely, we construct twisted Higgs bundles from a finite covering map and a vector bundle on that covering but without requiring that they match the eigen-data for some fixed twisted Higgs bundle. We investigate stability for twisted Higgs bundles constructed in this way, and compare our covering data to that of the traditional spectral cover.
\end{abstract}
\maketitle 

\section{Introduction and setup}

A finite, ramified covering of one variety $X$ of another $Y$ such that $X$ is embedded as a multi-section of the total space of a locally free sheaf $p:V\to Y$ sets in motion what is usually termed the ``spectral correspondence'', as described for the case $\dim(X)=\rank{V}=1.$ in \cite{Beauville-Narasimhan-Ramanan}. Here, the pushforward of a line bundle $M\to X$ under $p|_X$ produces a pair $((p|_X)_*M,(p|_X)_*\eta)$, where $\eta:M\to M\otimes (p|_X)^*V$ is the map given by tensoring with the tautological section of $p^*V\to\mbox{Tot}(V)$. This pair is usually known as a ``twisted Higgs bundle'' or ``Hitchin pair'' on $Y$. Conversely, one may start with such a pair on $Y$ and reconstruct $X\subset\mbox{Tot}(V)$ and $M$ as eigen-data. This article is motivated by a weakening of this setup where $X$ is not necessarily given from the outset as a multi-section of a sheaf $V$, and instead certain choices are made about the relationship of $X$ to $V$.
 
In the following, we assume that all varieties are projective and defined over $\C$. Many of our results will hold for arbitrary algebraically closed fields of characteristic $0$, but we make no explicit claim in this regard. Now, let $X$ and $Y$ be two smooth projective varieties with a finite surjective map $\pi:X\to Y$ of degree $r$, and fix an ample line bundle $H$ on $Y$. Choose a vector bundle $M$ on $X$ and a locally free sheaf $V$ on $Y$, as well as a section $\sigma\in H^0(X,\pi^*V).$ There is a natural map $\underline{\sigma}:M\to M\otimes \pi^*V$ sending $m$ to $m\otimes \sigma$, and the pushforward of this map by $\pi$ is a $V$-twisted Higgs bundle $(\pi_*M, \Phi:\pi_*M\to \pi_*M\otimes V)$. The annihilating polynomial $f_\Phi$ of $\Phi$ is of the form $f_\Phi(\eta)=\eta^r+s_1\eta^{r-1}+\ldots+s_{r-1}\eta+s_r$, where $s_i \in H^0(Y,\mathrm{Sym}^i(V))$ for $i\in \{1,\ldots, r\}$. This polynomial cuts out an induced spectral cover $C_\Phi \subseteq \Tot(V)$.

\section{The usual spectral correspondence}
Let $X$ be a smooth projective variety, and let $V$ be a locally free sheaf on $X$. A \emph{$V$-twisted Higgs bundle} on $X$ is a pair $(E,\Phi)$, where $E$ is a vector bundle on $X$ and $\Phi:E\to E\otimes V$ is a sheaf map satisfying $\Phi\wedge \Phi=0$. Given any $V$-twisted Higgs bundle, we can associate a subscheme $C_\Phi \subseteq \Tot(V)$ called the \emph{spectral cover of $\Phi$} by the vanishing of $f_\Phi:=\det(\Phi-\eta \otimes\id)\in H^0(\Tot(V),p_V^*(\Sym^{\rank{E}}(V)))$, where $p_V:\Tot(V)\to X$ is the bundle projection and $\eta$ is the tautological section of $p_V^*(V)$. The map $\pi_\Phi:C_\Phi\to X$ induced from the bundle projection $p_V$ is always finite locally free. If $C_\Phi$ is integral, then there is a rank-1 torsion-free sheaf $M$ on $C_\Phi$ so that $(\pi_\Phi)_*(M)=E$, and $(\pi_\Phi)_*(\eta_{C_\Phi}\otimes \id_M)=\Phi$. Conversely, starting with a torsion-free rank-1 sheaf $M$ on $C_\Phi$, the pair $((\pi_\Phi)_*M,(\pi_\Phi)_*(\eta\otimes \id_M)$ is a $V$-twisted Higgs bundle with spectral curve $C_\Phi$ whenever $(\pi_\Phi)_*M$ is locally free.

If, in the above construction, $M$ is chosen to have rank $r>1$, the resulting $V$-twisted Higgs bundle will have characteristic polynomial $f_\Phi^r$, and $\Phi$ will be annihilated by $f_\Phi$ \cite{Banerjee-Rayan}.

Motivated by this correspondence, we fix a finite map $\pi:X\to Y$ between smooth projective varieties, a vector bundle $V$ on $Y$, a vector bundle $M$ on $X$, and a section $\sigma\in H^0(X,\pi^*V)$. When there is an embedding $\iota:X\to \Tot(V)$ so that $\pi=\pi_\Phi\circ \iota$ and $\sigma=\iota^*\eta$, then this is exactly the construction from the spectral correspondence.

\section{Examples}
\begin{ex}[$\sigma \in \pi^*H^0(B,V)$]\label{pullback}
    Suppose that $\sigma \in \pi^*H^0(B,V)$, and set $U\subseteq X$ to be the complement of the ramification locus. Consider the Higgs field $\Phi|_{\pi(U)}$. For any point $p \in \pi(U)$, we can easily check that $\Phi|_p$ is a multiple of the identity, since for any $q \in \pi^{-1}(p)$ the map $\sigma$ acts by tensoring the vector $M|_q$ with $\sigma(p)$. Since $\Phi$ is equal to $\sigma\otimes \id_{\pi_*M}$ on a dense open subset, we must have $\Phi=\sigma\otimes \id_{\pi_*M}$.
\end{ex}

From now on, we assume for simplicity that $\sigma \in H^0(X,\pi^*V)\setminus\pi^*H^0(Y,V)$. To deal with sections of this form, it helps to consider the Tschirnhausen bundles associated to our covers.
\begin{definition}\label{Tschirnhausen}
    Let $\pi:X\to Y$ be a finite flat morphism between projective varieties. The \emph{Tschirnhausen bundle} of $\pi$ is the unique vector bundle $E_\pi$ on $Y$ so that $\pi_*\mc{O}_X\simeq \mc{O}_Y\oplus E_\pi^\vee$.
\end{definition}
\begin{remark}
    There is inconsistency in the literature of how the Tschirnhausen bundle is defined; some authors give it as $\pi_*\mc{O}_X/\mc{O}_Y$, and others as the dual of this bundle. We have chosen the latter convention to simplify the notation related to covers embedded in the total space of the Tschirnhausen bundle.
\end{remark}

One useful feature of the Tschirnhausen bundle is that $X$ admits a natural embedding into $\Tot(E)\simeq \Spec_{\mc{O}_Y}\left(\Sym(E^\vee)\right)$ corresponding to the ideal generated by elements of the form $v\otimes w-v\cdot w, v,w \in E(U)$, where $\cdot$ is the multiplication from the algebra structure of $\pi_*\mc{O}_X$.

\begin{ex}[Conic double cover of $\mb{P}^1$]\label{Conic}
    Suppose now that $X$ and $Y$ are both isomorphic to $\mb{P}^1$, $V$ is a line bundle, and $\pi$ is the map $[x:y]\mapsto [x^2:y^2]$. If $V\simeq \mc{O}(d)$ for $d>0$, then sections of $H^0(X,\pi^*V)/\pi^*H^0(Y,V)$ can be represented by polynomials of the form $$\sigma(x,y)=xyf(x^2,y^2),$$
    where $f$ is a homogeneous polynomials of degree $d-1$. Set $[s:t]$ to be homogeneous coordinates for $Y$. We can compute the invariant polynomials $s_1$ and $s_2$ of $\Phi$ as $\sigma(x,y)+\sigma(-x,y)=0$ and $\sigma(x,y)\sigma(-x,y)=-stf(s,t)^2$, respectively, giving an annihilating polynomial of $$\eta^2-stf(s,t)^2=\eta^2-stf(s,t)^2.$$ Notice that if $d>1$, then the induced spectral curve $C_\Phi$ is singular with singularities at the zeros of $f$, and the normalization of $C_\Phi$ is $X$.
\end{ex}

In general, the computations in Example \ref{Conic} can be extended to any case where $\deg(\pi)=2$, using the embedding of $X$ into the total space of the Tschirnhausen line bundle $\lambda\in \Pic(Y)$. (See for example \cite{Barth-Hulek-Peters-Vandeven,Friedman98}.)
\begin{proposition}[General double cover]\label{gen-double}
    Suppose that $\pi:X\to Y$ is a degree-2 map of smooth projective varieties, $M$ is a vector bundle on $X$, $V$ is a vector bundle on $Y$, and $\sigma\in H^0(X,\pi^*V)\setminus \pi^*H^0(Y,V)$ is a section of $\pi^*V$ which is not the pullback of a section of $V$. Then the Higgs field $\Phi:\pi_*M\to \pi_*M\otimes V$ constructed by pushing forward multiplication by $\sigma$ has induced spectral cover $C_\Phi$, and the normalization of $C_\Phi$ is $X$. Furthermore, the singularities of $C_\Phi$ occur precisely at the points of $Y$ where the projection of $\sigma$ to $H^0(Y, V\otimes \lambda^{-1})$ vanishes, where $\lambda$ is the Tschirnhausen bundle satisfying $\pi_*\mc{O}_X\simeq \mc{O}_Y\oplus \lambda^{-1}$.
\end{proposition}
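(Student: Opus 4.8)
The plan is to trivialize everything Zariski-locally on $Y$, identify $C_\Phi$ with the image of $X$ under an explicit morphism to $\Tot(V)$ coming from the Tschirnhausen embedding, and read off both claims from that description. \emph{Step 1 (reductions).} The induced spectral cover depends only on $\Phi$, and over a trivializing open of $Y$ the bundle $M$ is free, so $\Phi$ is a direct sum of $\rank M$ copies of the map $\phi\colon\pi_*\mathcal{O}_X\to\pi_*\mathcal{O}_X\otimes V$ given by multiplication by $\sigma$; hence $\Phi$ and $\phi$ share their annihilating polynomial and their $C_\Phi$, and we may take $M=\mathcal{O}_X$. By the projection formula and Definition \ref{Tschirnhausen}, $\pi_*\pi^*V\simeq V\oplus(V\otimes\lambda^{-1})$, so $\sigma=\sigma_0+\sigma_1$ with $\sigma_0\in H^0(Y,V)$ and $\sigma_1$ given by $\widetilde\sigma\in H^0(Y,V\otimes\lambda^{-1})$; the hypothesis $\sigma\notin\pi^*H^0(Y,V)$ says exactly $\widetilde\sigma\neq0$. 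By Example \ref{pullback} the $\sigma_0$-part of $\Phi$ is $\sigma_0\otimes\mathrm{Id}$, so translation by $\sigma_0$ in the fibres of $\Tot(V)$ is an automorphism over $Y$ carrying the spectral cover of multiplication by $\sigma_1$ to $C_\Phi$; since such an automorphism preserves smoothness, singular loci and normalizations, we may assume $\sigma=\sigma_1$ is anti-invariant for the covering involution $\tau$.

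\emph{Step 2 (local model and comparison map).} Locally on $Y$, using $\mathrm{char}=0$ and that $\lambda^{-1}$ is the $(-1)$-eigenspace of $\tau$, choose a generator $e$ of $\lambda^{-1}$ with $e^2=\delta$, where $\delta\in H^0(Y,\lambda^2)$ cuts out the branch divisor $B$. In the frame $\{1,e\}$ of $\pi_*\mathcal{O}_X$, multiplication by $\sigma=\widetilde\sigma e$ has matrix $\left(\begin{smallmatrix}0 & \delta\widetilde\sigma\\ \widetilde\sigma & 0\end{smallmatrix}\right)$ with entries in $V$, so the annihilating polynomial is $f_\Phi(\eta)=\eta^{\otimes2}-\delta\,\widetilde\sigma^{\otimes2}$ (with $\delta\,\widetilde\sigma^{\otimes2}\in H^0(Y,\Sym^2V)$), cutting out $C_\Phi\subseteq\Tot(V)$. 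Let $g\colon X\to\Tot(V)$ be the $Y$-morphism attached to the $\mathcal{O}_Y$-algebra map $\Sym(V^\vee)\to\pi_*\mathcal{O}_X$ determined by $\sigma$; equivalently $g$ is the composite of the Tschirnhausen embedding $\iota\colon X\hookrightarrow\Tot(\lambda)$ with the bundle map $\widetilde\sigma_*\colon\Tot(\lambda)\to\Tot(V)$ induced by $\widetilde\sigma\colon\lambda\to V$. Since $\sigma^{\otimes2}=e^2\widetilde\sigma^{\otimes2}=\delta\widetilde\sigma^{\otimes2}$ on $X$, one has $g^*f_\Phi=0$, so $g$ is a finite morphism $X\to C_\Phi$ over $Y$.

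\emph{Step 3 (normalization and singular locus).} On $U_0:=\{\widetilde\sigma\neq0\}\setminus B$ the scheme $C_\Phi$ is finite étale of degree $2$ over $Y$ — the two points over $y$ are the distinct $\pm\sqrt{\delta(y)}\,\widetilde\sigma(y)$ — and $g$ restricts to a bijective, hence to an, isomorphism $X|_{U_0}\xrightarrow{\sim}C_\Phi|_{U_0}$; thus $g$ is birational onto $C_\Phi$, and $C_\Phi$ is irreducible since its positive-dimensional part meets $U_0$, where it is irreducible. As $X$ is smooth, the finite birational $g$ is the normalization of $C_\Phi$ (of $C_{\Phi,\mathrm{red}}$ when $\rank V>1$). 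For the singular locus, $g=\widetilde\sigma_*\circ\iota$ with $\iota$ a closed immersion; over $\{\widetilde\sigma\neq0\}$ the map $\widetilde\sigma\colon\lambda\to V$ is a subbundle inclusion, so $\widetilde\sigma_*$ is a closed immersion there and $C_\Phi\cong X$ is \emph{smooth} over all of $\{\widetilde\sigma\neq0\}$ — in particular at the branch points where $\widetilde\sigma\neq0$, so ordinary ramification of $C_\Phi\to Y$ creates no singularity (here one uses that $X$ smooth forces $B$ smooth with $\delta$ vanishing simply, so $d\delta\neq0$ along $B$). Over $\{\widetilde\sigma=0\}$ the map $\widetilde\sigma_*$ collapses the fibres of $\Tot(\lambda)$ onto the zero section, and a coordinate check shows $df_\Phi$ vanishes at $(y_0,0)$ when $\widetilde\sigma(y_0)=0$ (for $\rank V=1$: $f_\Phi=\eta^2-\delta\widetilde\sigma^2$, and $\partial_\eta f_\Phi=2\eta$, $d(\delta\widetilde\sigma^2)$ both vanish at $\eta=\widetilde\sigma=0$), so $C_\Phi$ is singular exactly over $\{\widetilde\sigma=0\}$. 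Undoing the Step 1 translation, $\{\widetilde\sigma=0\}$ is the zero locus of the projection of the original $\sigma$ to $H^0(Y,V\otimes\lambda^{-1})$, and we are done.

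\emph{Main obstacle.} The technical heart is the last part of Step 3: the local Jacobian computation at the ramification locus of $\pi$, where one must cleanly separate the harmless ramification of $C_\Phi\to Y$ over the smooth branch divisor $B$ from the genuine singularities forced where $\widetilde\sigma$ vanishes — it is precisely smoothness of $X$ that rules out the former becoming a singularity. A secondary nuisance, invisible for $\rank V=1$ but present otherwise, is that $\{f_\Phi=0\}$ can pick up nilpotents over $B$ because the defining section of $p_V^*\Sym^2V$ has excess vanishing; this is handled by passing to $C_{\Phi,\mathrm{red}}$, which changes neither the normalization nor the location of the (non-embedded) singular points.
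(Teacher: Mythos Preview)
Your proof is correct and follows essentially the same route as the paper's: decompose $\sigma$ via the Tschirnhausen splitting $\pi_*\pi^*V\simeq V\oplus(V\otimes\lambda^{-1})$, compute the annihilating polynomial explicitly, use a Jacobian check together with smoothness of $X$ (equivalently, simple vanishing of the branch section) to separate the harmless ramification from the genuine singularities at $\{\widetilde\sigma=0\}$, and realize the normalization as the Tschirnhausen embedding composed with the fibrewise map induced by $\widetilde\sigma$. The only organizational difference is that you first translate away the invariant part $\sigma_0$ and reduce to $M=\mathcal{O}_X$, whereas the paper carries the full $\sigma=\pi^*f+s\pi^*g$ through the computation; your remark about non-reducedness of $C_\Phi$ when $\rank V>1$ is a point the paper's Jacobian calculation leaves implicit.
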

\begin{proof}
    Suppose that $\pi$ has degree 2 and ramification divisor $R$. Then there is a line bundle $\lambda \in \Pic(Y)$ so that $\mc{O}_X(R)\simeq \pi^*\lambda$ and $\pi_*\mc{O}_X\simeq \mc{O}_Y\oplus \lambda^{-1}$. Swapping preimages of $\pi$ gives rise to an involution $\iota:X\to X$ whose fixed set is exactly the support of $R$. We have a natural decomposition of $H^0(C,\pi^*V) \simeq H^0_e(C,\pi^*V)\oplus H^0_o(C,\pi^*V)$ into sections which are fixed by $\iota^*$ and those which are negated by $\iota^*$, and this decomposition directly corresponds to the decomposition $$H^0(X,\pi^*V)=H^0(Y,\pi_*\pi^*V)\simeq H^0(Y,V\oplus V\otimes\lambda^{-1})=H^0(Y,V)\oplus H^0(Y,V\otimes\lambda^{-1})$$ by writing sections of $H^0(X,\pi^*V)$ as $\pi^*f+s\pi^*g$, where $s$ is a section of $\pi^*\lambda$ with $\mathrm{div}(s)=R$, $f$ is a section of $V$, and $g$ is a section of $V\otimes \lambda^{-1}$. For any section $\sigma =\pi^*f+s\pi^*g \in H^0(X,\pi^*V)\simeq H^0(Y,V)\oplus H^0(Y,V\otimes \lambda^{-1})$ with $g\neq 0$, the corresponding Higgs field $\Phi$ will have invariant polynomials \begin{align*}s_1(\pi(p))&=2f(\pi(p))+s(p)g(\pi(p))+s(\iota(p))g(\pi(p))=2f(\pi(p)),\\
    s_2(\pi(p))&=(f(\pi(p))+s(p)g(\pi(p)))(f(\pi(p))+s(\iota(p))g(\pi(p)))=f(\pi(p))^{2}-t(\pi(p))g(\pi(p))^{2},\end{align*} where $t\in H^0(Y,\lambda^2)$ is the section so that $\pi^*t=s^2$. From this, we can see that the annihilating polynomial of $\Phi$ is $$\eta^2-2f\eta+f^2-tg^2=(\eta-f)^2-tg^2,$$
    and the Jacobian corresponding to this annihilating polynomial is $$\begin{pmatrix}
        2(\eta-f) & 2(f-\eta)J(f)-g^2J(t)-2tgJ(g)
    \end{pmatrix}=\begin{pmatrix}
        2(\eta-f) & 2(f-\eta)J(f)-g(gJ(t)+2tJ(g))
    \end{pmatrix},$$
    so that $C_\Phi$ is smooth at any point with $\eta\neq f(p)$. Recall that $X$ embeds in $\Tot(\lambda)$ as the hypersurface $\eta^2-t$, so since $X$ is smooth the Jacobian of $t$ does not vanish at a zero of $t$. Consider a point $(p,f(p)) \in C_\Phi\subset \Tot(L)$. Then either $t$ or $g$ vanishes at $p$. Clearly, if $g(p)=0$ then the Jacobian of the annihilating polynomial also vanishes at $p$, and $C_\Phi$ is singular at $(p,f(p))$. If instead $g(p)\neq 0$, then $t(p)=0$, so the Jacobian of the annihilating polynomial becomes $\begin{pmatrix}0 & -g^2(p)J(t)_p\end{pmatrix}$, so that when $g(p)\neq0$, $(p,0)$ is a singular point if and only if $J(t)_p=0$. However, since $X$ embeds in $\Tot(\lambda)$ as the hypersurface $\eta^2-t$, any point with $t(p)=0$ and $J(t)_p=0$ would correspond to a singular point of $X$. Since $X$ is smooth, we can conclude that $(p,f(p))$ is a smooth point of $C_\Phi$ if and only if $g(p)\neq 0$. 

    Since we have a natural finite map $X\to C_\Phi$ given by sending a point $(p,\eta)\in X\subset \Tot(\lambda)$ to $(p,\eta*g(p))$ which is clearly an isomorphism away from zeros of $g$, $X$ is the normalization of $C_\Phi$.
\end{proof}

We can extend some of the techniques to a more general case where $\pi:X\to Y$ is an Abelian cover, meaning that the induced field inclusion $K(Y)\hookrightarrow K(X)$ is Galois with Abelian Galois group $\Gamma$. $K(X)$ is naturally isomorphic $K(Y)[\Gamma]$ as an algebra, so as a $\Gamma$-module $K(X)$ decomposes into $$K(X)=\bigoplus\limits_{\rho \in \hat{\Gamma}}\rho,$$
where $\hat{\Gamma}$ is the Pontryagin dual of $\Gamma$. This decomposition extends to a decomposition of $\mathcal{O}_Y$-modules:
$$\pi_*\mc{O}_X\simeq \bigoplus\limits_{\rho \in \hat{\Gamma}}L_\rho^{-1}$$ for some line bundles $L_\rho \in \Pic(Y)$ with $L_1\simeq \mc{O}_Y.$ We can also associate to each $L_\rho$ the (unique up to scaling) section $s_\rho \in H^0(X,\pi^*L_\rho)$ such that $\gamma\cdot s_\rho=\rho(\gamma)s_\rho$ for any $\gamma \in \Gamma.$ Similarly, for any vector bundle $V$ on $Y$, we have an isomorphism \begin{align*}
    \bigoplus\limits_{\rho\in \hat{\Gamma}}H^0(Y,V\otimes L_\rho^{-1}) & \simeq H^0(X,\pi^*V)\\
    h_\rho &\mapsto s_\rho\pi^*h_\rho
\end{align*}
for $h_\rho \in H^0(Y,V\otimes L_\rho^{-1}),$ which decomposes sections of a bundle $V$ into corresponding irreducible representations. The Higgs bundle $(E,\Phi)=(\pi_*M, \pi_*\underline{\sigma})$ has spectral curve given by the symmetric polynomials of $\sigma(\pi^{-1}(y))$ at a point $y \in Y$. Thus the $k^{\text{th}}$ symmetric polynomial is given by $$t_k(y)=\sum\limits_{\{\gamma_1,\ldots, \gamma_k\}\subset \Gamma}\gamma_1^*\sigma(x)\cdots\gamma_k^*\sigma(x)$$ for any $x\in \pi^{-1}(y)$, which we can compute using the representation decomposition of $\sigma$ and the algebra structure of $\pi_*\mc{O}_X$.

Before we move on to examples involving triple covers, we recall some results from \cite{Miranda85} on general triple covers. (Note an important notational difference between \cite{Miranda85} and this paper: what Miranda calls the Tschirnhausen module is the dual of the Tschirnhausen bundle as described in Definition \ref{Tschirnhausen}.)
\begin{proposition}[Proposition 4.7, \cite{Miranda85}]
    Let $X$ and $Y$ be projective varieties with $\pi:X\to Y$ be a finite flat map of degree three with associated Tschirnhausen bundle $E_\pi$ (so that $\pi_*\mc{O}_X\simeq \mc{O}_Y\oplus E_\pi^\vee$). Then the line bundle associated to the branch divisor of $\pi$ is $\det(E_\pi)^2$.
\end{proposition}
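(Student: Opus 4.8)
The plan is to identify the branch divisor of $\pi$ with the discriminant of the trace form on $\pi_*\mc{O}_X$; this reduces the proposition to a one-line determinant computation using the Tschirnhausen decomposition. Write $A:=\pi_*\mc{O}_X$, which is a locally free sheaf of $\mc{O}_Y$-algebras of rank $3$ because $\pi$ is finite flat of degree $3$, and recall that $A\simeq \mc{O}_Y\oplus E_\pi^\vee$ by the definition of $E_\pi$.

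First I would set up the trace-form picture. Since we work in characteristic $0$, the symmetric $\mc{O}_Y$-bilinear pairing $A\otimes_{\mc{O}_Y}A\to\mc{O}_Y$ given by $(a,b)\mapsto \mathrm{Tr}_{A/\mc{O}_Y}(ab)$ induces an $\mc{O}_Y$-linear map $\tau\colon A\to A^\vee$, which is generically an isomorphism because $\pi$ is generically \'etale. Passing to top exterior powers yields $\det\tau\colon \det A\to\det(A^\vee)\simeq(\det A)^\vee$, i.e.\ a nonzero global section of the line bundle $(\det A)^{\otimes -2}$. On any affine open over which $A$ is free with basis $e_1,e_2,e_3$, this section is the classical discriminant $\det\!\big(\mathrm{Tr}(e_ie_j)\big)$, whose zero locus is exactly the discriminant divisor of $\pi$, and this coincides with the branch divisor $\Delta$ since $X$ and $Y$ are smooth of the same dimension. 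Consequently $\mc{O}_Y(\Delta)\simeq(\det A)^{\otimes -2}$.

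Next I would run the computation. From $A\simeq\mc{O}_Y\oplus E_\pi^\vee$ we get $\det A\simeq \det(E_\pi^\vee)\simeq(\det E_\pi)^{-1}$, and therefore
$$\mc{O}_Y(\Delta)\;\simeq\;(\det A)^{\otimes -2}\;\simeq\;(\det E_\pi)^{\otimes 2},$$
which is the claim. As a consistency check, for a hyperelliptic curve $X$ of genus $g$ over $\mb{P}^1$ one has $E_\pi^\vee\simeq\mc{O}(-g-1)$, so $(\det E_\pi)^{\otimes 2}\simeq\mc{O}(2g+2)$, matching the degree $2g+2$ of the branch divisor; note also that the identity $\mc{O}_Y(\Delta)\simeq(\det\pi_*\mc{O}_X)^{\otimes -2}$ holds for covers of any degree, so the restriction to degree $3$ is only for compatibility with \cite{Miranda85}.

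The step I expect to be the real obstacle is the one asserted quickly in the second paragraph: that the vanishing scheme of $\det\tau$ is the branch locus. This is the comparison between the discriminant ideal of $A$ over $\mc{O}_Y$ and the ramification of $\pi$ --- equivalently, that the discriminant is the norm of the different and that the different is supported precisely along the ramification divisor --- which is standard for finite flat morphisms of smooth varieties but not purely formal. An alternative route, closer to the source, is Miranda's explicit one: trivialize $E_\pi^\vee$ locally, normalize its generators to be trace-free, encode the algebra multiplication as a map $\Sym^2 E_\pi^\vee\to\mc{O}_Y\oplus E_\pi^\vee$, and compute the discriminant directly from the structure constants; one then checks that its transformation law is that of a section of $(\det E_\pi)^{\otimes 2}$. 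Either way the only genuine content is the identity $\det(\mc{O}_Y\oplus E_\pi^\vee)\simeq(\det E_\pi)^{-1}$ together with the trace-form characterization of the branch divisor.
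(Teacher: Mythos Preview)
The paper does not supply its own proof of this proposition: it is quoted verbatim from Miranda and used as a black box, so there is nothing in the paper to compare your argument against.

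Your argument itself is correct. The identification of the branch divisor with the zero scheme of $\det\tau\in H^0\big(Y,(\det A)^{-2}\big)$ is the standard discriminant/different story for finite flat generically \'etale morphisms, and once that is granted the computation $\det A\simeq\det(E_\pi^\vee)=(\det E_\pi)^{-1}$ immediately gives $\mc{O}_Y(\Delta)\simeq(\det E_\pi)^{2}$. Two small remarks: the proposition as stated here only says ``projective varieties,'' whereas your sentence ``since $X$ and $Y$ are smooth of the same dimension'' imports smoothness---this is harmless in the ambient setting of the paper (where all varieties are smooth) and is in any case what one needs for the discriminant divisor to coincide with the branch divisor; and your hyperelliptic sanity check is a degree-$2$ example, which you already flag as testing the degree-independent identity $\mc{O}_Y(\Delta)\simeq(\det\pi_*\mc{O}_X)^{-2}$ rather than the degree-$3$ statement per se.

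For comparison, Miranda's own proof in \cite{Miranda85} is the explicit local computation you sketch at the end: one writes the multiplication on $E_\pi^\vee$ in a trace-free local frame via structure constants, computes the discriminant as a polynomial in those constants, and checks its transformation law under change of frame is that of a section of $(\det E_\pi)^{2}$. Your trace-form route is more conceptual and manifestly degree-independent; Miranda's is more hands-on and yields the explicit cubic discriminant formula used in Lemma~\ref{triple-singularity}.
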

\begin{lemma}[Lemma 5.1, \cite{Miranda85}]\label{triple-singularity}
    Let $Y$ be a variety, and let $\lambda\in \Pic(Y)$ be a line bundle. Suppose $p$ is a point of $Y$ with local ring $(\mc{O}_p,\mathfrak{m})$ such that $Y$ is smooth at $p$. Take $X$ to be a triple cover of $Y$ of the form $V(\eta^3+a\eta+b)\subset \Tot(\lambda)$, where $\eta$ is the tautological section of $\lambda$, $a\in H^0(Y,\lambda^2)$, and $b \in H^0(Y,\lambda^3)$. Then $X$ has a singularity above $p$ if and only if one of the following conditions holds:
    \begin{enumerate}
        \item[(a)] $a \in \mathfrak{m}$ and $b \in \mathfrak{m}^2$, in which case $(p,0)$ is a singular point of $X$;
        \item[(b)] $a \not\in \mathfrak{m}$ and $\Delta\in \mathfrak{m}^2$, in which case $(p,-3b(p)/2a(p))$ is a singular point of $X$.
    \end{enumerate}
    Here $\Delta:=4a^3+27b^2$ is the cubic discriminant of the polynomial defining $X$.
\end{lemma}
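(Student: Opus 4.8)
The plan is to reduce the statement to a local computation at $p$ and then apply the Jacobian criterion for a hypersurface. First I would choose an affine neighbourhood of $p$ over which $\lambda$ is trivial; writing $(\mathcal{O}_p,\mathfrak{m})$ for the (regular) local ring of $Y$ at $p$ and $k=\mathcal{O}_p/\mathfrak{m}$ for the residue field, the total space $\Tot(\lambda)$ then restricts to $\Spec\mathcal{O}_p[\eta]$ and $X$ becomes the hypersurface $V(F)$ with $F=\eta^3+a\eta+b\in\mathcal{O}_p[\eta]$, where $a,b$ now denote the local trivializations of the given sections. A closed point $q=(p,\eta_0)$ of $X$ lying over $p$ is the same thing as a root $\eta_0$ of $\bar F=\eta^3+\bar a\eta+\bar b\in k[\eta]$; since $\Tot(\lambda)$ is regular of dimension $\dim Y+1$ near $q$ and $X=V(F)$, the scheme $X$ is singular at $q$ if and only if $F\in\mathfrak{n}_q^2$, i.e. the differential $dF_q$ vanishes in $\mathfrak{n}_q/\mathfrak{n}_q^2\cong(\mathfrak{m}/\mathfrak{m}^2)\oplus k\,d\eta$. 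Expanding $F$ in the coordinate $\eta-\eta_0$ and reducing gives $dF_q=(3\eta_0^2+\bar a)\,d\eta+\eta_0\,da+db$, where $da,db\in\mathfrak{m}/\mathfrak{m}^2$ are the differentials of $a,b$ at $p$. Hence $X$ is singular at $q$ exactly when $3\eta_0^2+\bar a=0$ in $k$ and $\eta_0\,da+db=0$ in $\mathfrak{m}/\mathfrak{m}^2$.

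Next I would carry out the case analysis dictated by whether $a\in\mathfrak{m}$. If $a\in\mathfrak{m}$, then $3\eta_0^2+\bar a=0$ forces $\eta_0=0$, and $(p,0)$ lies on $X$ precisely when $\bar b=0$, i.e. $b\in\mathfrak{m}$; the remaining equation $\eta_0\,da+db=db=0$ then reads $b\in\mathfrak{m}^2$, and $(p,0)$ is the only point of $X$ over $p$ that can be singular. This is case (a). If instead $a\notin\mathfrak{m}$, a simple root $\eta_0$ of $\bar F$ has $\bar F'(\eta_0)=3\eta_0^2+\bar a\neq0$ and hence gives a smooth point, so any singularity of $X$ over $p$ lies at a repeated root of $\bar F$. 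Combining $\bar F(\eta_0)=0$ with $3\eta_0^2+\bar a=0$, and noting that $\bar a\neq0$ forces $\bar b\neq0$, yields $\eta_0=-3\bar b/(2\bar a)=-3b(p)/(2a(p))$; in particular this repeated root lies in $k$, and it exists if and only if the cubic discriminant satisfies $\bar\Delta=4\bar a^3+27\bar b^2=0$, i.e. $\Delta\in\mathfrak{m}$.

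It then remains to show, still under $a\notin\mathfrak{m}$ and $\Delta\in\mathfrak{m}$, that the last condition $\eta_0\,da+db=0$ at $\eta_0=-3b(p)/(2a(p))$ is equivalent to $\Delta\in\mathfrak{m}^2$, i.e. $d\Delta=0$ in $\mathfrak{m}/\mathfrak{m}^2$. This is the one place where a short computation is needed: from $\Delta=4a^3+27b^2$ one gets $d\Delta=12\bar a^2\,da+54\bar b\,db$, and using $4\bar a^3+27\bar b^2=0$ a direct check shows $d\Delta=54\,\bar b\,(\eta_0\,da+db)$; since $\bar b\neq0$, the vanishing of $\eta_0\,da+db$ is equivalent to $d\Delta=0$, hence to $\Delta\in\mathfrak{m}^2$. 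This establishes case (b), with singular point $(p,-3b(p)/(2a(p)))$, and completes the equivalence — note that (a) and (b) are mutually exclusive, and in the complementary situations $X$ is smooth over $p$. I expect the proportionality identity $d\Delta=54\,\bar b\,(\eta_0\,da+db)$ — recognizing that the Jacobian condition in the base directions is governed by the differential of the discriminant once $\bar\Delta=0$ — to be the main (if modest) obstacle; everything else is the standard hypersurface Jacobian criterion together with elementary casework. I would also remark that the argument uses only that $2$ and $3$ are invertible, which holds in characteristic $0$.
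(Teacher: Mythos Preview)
The paper does not supply its own proof of this lemma; it is quoted verbatim as Lemma~5.1 of \cite{Miranda85} and then applied in the subsequent examples. Your argument---trivialize $\lambda$ near $p$, apply the Jacobian criterion to the hypersurface $F=\eta^3+a\eta+b$, and split according to whether $a\in\mathfrak{m}$---is correct and is exactly the kind of local computation one expects (and indeed is essentially how Miranda proves it). The identity $d\Delta=54\,\bar b\,(\eta_0\,da+db)$ under $\bar\Delta=0$ checks out: from $4\bar a^3+27\bar b^2=0$ one gets $12\bar a^2=-81\bar b^2/\bar a$, so $12\bar a^2\,da+54\bar b\,db=54\bar b\bigl(-\tfrac{3\bar b}{2\bar a}\,da+db\bigr)$, and $\bar b\neq 0$ follows from $\bar a\neq 0$ and $\bar\Delta=0$.

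One minor expository point: in case~(a) you should perhaps make explicit that when $a\in\mathfrak{m}$, the equation $3\eta_0^2+\bar a=0$ rules out singularities at any fibre point other than $\eta_0=0$, so no singularity of type~(b) can be hiding; you say this, but it is the step that guarantees the ``only if'' direction is complete. Otherwise the proposal is a clean, self-contained proof of the cited result.
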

\begin{ex}[Triple cover $\mb{P}^1\to \mb{P}^1$]\label{triple-d1}
Suppose that $X$ and $Y$ are both isomorphic to $\mb{P}^1$, and $\pi$ is the map $[x:y]\mapsto [x^3:y^3]$. If $V\simeq \mc{O}(d)$ for $d>0$, then sections of $H^0(X,\pi^*V)/\pi^*H^0(Y,V)$ can be represented by polynomials of the form $$\sigma(x,y)=xy^2f(x^3,y^3)+x^2yg(x^3,y^3),$$
where $f$ and $g$ are homogeneous polynomials of degree $d-1$. Set $[s:t]$ to be homogeneous coordinates for $B$. We can compute the invariant polynomials $s_1$, $s_2$, and $s_3$ of $\Phi$ as $s_1=\sigma(x,y)+\sigma(\omega x,y)+\sigma(\omega^2x,y)=0$, $s_2=\sigma(x,y)\sigma(\omega x,y)+\sigma(x,y)\sigma(\omega^2x,y)+\sigma(\omega x,y)\sigma(\omega^2x,y)=-3stf(t,s)g(t,s)$, and $s_3=\sigma(x,y)\sigma(\omega x,y)\sigma(\omega^2x,y)=st^2f(s,t)^3+s^2tg(s,t)^3,$ giving an annihilating polynomial $$\eta^3-3stf(s,t)g(s,t)\eta-st^2f(s,t)^3-s^2tg(s,t)^3.$$ The corresponding spectral curve $C_\Phi$ will be integral with arithmetic genus $3d-2$ by Grothendieck--Riemann--Roch if $f$ and $g$ are not both identically zero. The cubic discriminant of the annihilating polynomial is given by $$\Delta(s,t)=27s^2t^2(tf(s,t)^3-sg(s,t)^3)^2.$$ We claim that the singular locus lies over the divisor $tf(s,t)^3-sg(s,t)^3$. This divisor appears with multiplicity 2 in the discriminant, so by Lemma \ref{triple-singularity} there is a singularity above any point where $tf(s,t)^3-sg(s,t)^3$ vanishes and $stf(s,t)g(s,t)$ does not. Suppose that $p$ is a common zero of $stf(s,t)g(s,t)$ and $tf(s,t)^3-sg(s,t)^3$. By Lemma \ref{triple-singularity} it is enough to show that $st(tf(s,t)^3+g(s,t)^3)$ vanishes to order 2 at any such point. 

If $tf(s,t)^3-sg(s,t)^3$ vanishes at $[0:1]$, then $f(0,1)=0$ and if $tf(s,t)^3-sg(s,t)^3$ vanishes at $[1:0]$ then $g(1,0)=0$, so $X$ has a singularity above common zeros of $tf(s,t)^3-sg(s,t)^3$ and $st$.

If $[s:t]$ is a shared zero of $f$ and $tf^3-sg^3$, then either $[s:t]=[0:1]$ or $g(s,t)=0$. Similarly, should $[s:t]$ be a common zero of $g$ and $tf^3-sg^3$, then either $[s:t]=[1:0]$ or $f(s,t)=0$. In any of these cases, $([s:t],0)$ is a singular point of $X$.

\end{ex}
The computations of the cubing map on $\mathbb{P}^1$ can be extended to work in the general situation of a cyclic triple cover.

\begin{ex}[General cyclic triple cover]
    Let $\pi:X\to Y$ be any cyclic triple cover map, and suppose that $V$ is a line bundle. Then, the Tschirnhausen bundle of $\pi$ is of the form $L_1\oplus L_2$, where $L_1^{-1}$ and $L_2^{-1}$ correspond to the $\zeta$ and $\zeta^2$ sub-representations of $\Z/3\Z$ on $\mc{O}_X$, respectively, for $\zeta$ a primitive $3^\text{rd}$ root of unity. It follows that there is a unique section $s \in H^0(X,\pi^*L_1)$ up to scaling so that $\zeta\cdot s=\zeta s$ and a unique section $t \in H^0(X,\pi^*L_2)$ up to scaling so that $\zeta\cdot t=\zeta^2 t$. Now for any vector bundle $E$ on $Y$, we can use the decomposition of the group $H^0(X,\pi^*V)$ into subrepresentations to uniquely write any section $\sigma \in H^0(X,\pi^*E)$ as $\sigma=\pi^*f+s\pi^*g+t\pi^*h$, where $f \in H^0(Y,E)$, $g \in H^0(Y,E\otimes L_1^{-1})$, and $h \in H^0(Y,E\otimes L_2^{-1})$. 

    In particular, since $\zeta\cdot s^2=\zeta^2s^2$, there is a section $a \in H^0(Y,L_1^2\otimes L_2^{-1})$ so that $s^2=t\pi^*a$. Similarly, since $\zeta\cdot t^2=\zeta t^2$, there is a section $b \in H^0(Y,L_2^2\otimes L_1^{-1})$ so that $t^2=s\pi^*b$. From this we can see that since $s^2t^2=st\pi^*(ab)$, we must have $st=\pi^*(ab)$.

    Now take $\sigma=s\pi^*g+t\pi^*h\in H^0(X,\pi^*V)$ with $g$ and $h$ not both zero. Pushing forward multiplication by $\sigma$ gives the annihilating polynomial $$\eta^3-3abgh\eta-a^2bg^3-ab^2h^3$$ through analogous computations to Example \ref{triple-d1}, and the cubic discriminant will be $$\Delta=27(a^2bg^3-ab^2h^3)^2=27a^2b^2(ag^3-bh^3)^2.$$
    Note also that the branch locus of $\pi$ is given by $a^2b^2$ by \cite[Proposition 7.4]{Miranda85}, implying that we should expect the singular locus of the induced spectral curve $C_\Phi$ to be $ag^3-bh^3$. Indeed, using the singularity analysis from Lemma \ref{triple-singularity}, we see that the induced spectral curve is singular at any point where $ag^3-bh^3$ vanishes and $abgh$ does not. For any point $p$ where $abgh$ vanishes, $a^2bg^3+ab^2h^3$ will vanish to order 2 at $p$ if and only if $ag^3-bh^3$ vanishes at $p$, so $C_\Phi$ is singular at every point of the divisor $ag^3-bh^3$. 
\end{ex}

\begin{claim}
    If $r$ is prime, the induced spectral cover $C_\Phi$ is birational to $X$ whenever $\sigma \in H^0(X,\pi^*V)\setminus \pi^*H^0(Y,V).$
\end{claim}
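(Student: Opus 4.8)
The plan is to exhibit a birational morphism $X\to C_\Phi$. The section $\sigma\in H^0(X,\pi^{*}V)$ is the same datum as a map $\mathcal O_X\to\pi^{*}V$, hence a section of the projection $\Tot(\pi^{*}V)=X\times_Y\Tot(V)\to X$; post-composing with the projection to $\Tot(V)$ gives a morphism $\nu\colon X\to\Tot(V)$ over $\pi$ with $\nu^{*}\eta=\sigma$, where $\eta$ is the tautological section. Pulling back the coefficient sections $s_i\in H^0(Y,\Sym^i V)$ of $f_\Phi$, one finds $\nu^{*}f_\Phi=\sigma^{r}+(\pi^{*}s_1)\sigma^{r-1}+\dots+\pi^{*}s_r\in H^0(X,\pi^{*}\Sym^{r}V)$. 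On the stalk $M_\xi$ at the generic point of $X$, multiplication by this section is $f_\Phi$ applied to the multiplication-by-$\sigma_\xi$ operator; viewed over $K(Y)$ after push-forward this is $f_\Phi(\Phi)$, which vanishes because $f_\Phi$ annihilates $\Phi$, so (as $M_\xi\neq 0$) the section $\nu^{*}f_\Phi$ vanishes at the generic point of $X$ and hence identically. Thus $\nu$ factors through the closed subscheme $C_\Phi\subseteq\Tot(V)$; on closed points above a point $y$ over which $\pi$ is \'etale it sends $x\in\pi^{-1}(y)$ to the eigenvalue $\sigma(x)\in V_y$ of $\Phi_y$. This is exactly the map $X\to C_\Phi$ appearing in the usual spectral correspondence.

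Let $\overline C\subseteq C_\Phi$ be the scheme-theoretic image of $\nu$. Since $X$ is integral so is $\overline C$, and $\overline C$ is finite over $Y$ (being closed in $C_\Phi$) and dominates $Y$ (as $\pi$ does), so
\[
r=[K(X):K(Y)]=[K(X):K(\overline C)]\cdot[K(\overline C):K(Y)].
\]
Because $r$ is prime, $[K(\overline C):K(Y)]$ is either $1$ or $r$. I would rule out the first case: if $[K(\overline C):K(Y)]=1$, then $\overline C\to Y$ is finite and birational onto the normal variety $Y$, hence an isomorphism, so $\overline C$ is the graph of a global section $\tau\in H^0(Y,V)$; the factorization $X\xrightarrow{\nu}\overline C\hookrightarrow\Tot(V)$ over $Y$ then identifies $\nu$ with $\tau\circ\pi$, whence $\sigma=\pi^{*}\tau\in\pi^{*}H^0(Y,V)$, contradicting the hypothesis on $\sigma$. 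Therefore $[K(X):K(\overline C)]=1$, i.e. $\nu\colon X\to\overline C$ is birational.

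It remains to see that $C_\Phi$ is birational to $\overline C$. Since $f_\Phi$ is monic of degree $r$, the morphism $C_\Phi\to Y$ is finite flat of degree $r$, so $\mathcal O_{C_\Phi}$ is $\mathcal O_Y$-torsion-free; hence every irreducible component of $C_\Phi$ dominates $Y$, and the generic fibre $(C_\Phi)_\eta$ over $\eta=\Spec K(Y)$ is an Artinian $K(Y)$-algebra of length $r$ into which $\mathcal O_{C_\Phi}$ embeds. The closed subscheme $\overline C$ meets this fibre in $\Spec K(X)$, which already has length $[K(X):K(Y)]=r$ over $K(Y)$; a surjection of $K(Y)$-algebras onto one of equal finite length is an isomorphism, so $(C_\Phi)_\eta=\Spec K(X)$ is a field. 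Consequently $C_\Phi$ is integral with function field $K(X)$, and is therefore birational to $X$.

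The argument turns entirely on the multiplicativity $r=[K(X):K(\overline C)]\cdot[K(\overline C):K(Y)]$ together with the primality of $r$; the rest is bookkeeping, and I expect the main friction to be the rank-$V$ generalities. Concretely: making the vanishing $\nu^{*}f_\Phi=0$ of the first paragraph and the statement that $C_\Phi\to Y$ is finite flat of degree $r$ precise when $\rank V>1$, where $f_\Phi$ is a section of $p_V^{*}\Sym^{r}V$ cutting out $C_\Phi$ rather than an honest one-variable polynomial; here one wants the clean description $C_\Phi\cap V_y=\{\sigma(x):x\in\pi^{-1}(y)\}$ for general $y$ (which follows since $\Sym^{\bullet}V_y$ is a domain and the $\sigma(x)$ are the roots of $f_\Phi|_y$), and one should confirm flatness so that $C_\Phi$ has no spurious components over proper closed subsets of $Y$. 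When $V$ is a line bundle all of this is immediate, and the conceptual skeleton above is unchanged.
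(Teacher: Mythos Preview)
Your proof is correct and follows the same strategy as the paper: construct the natural map $\psi\colon X\to C_\Phi$, $x\mapsto(\pi(x),\sigma(x))$, and use primality of $r=\deg\pi$ together with the factorization $\pi=\pi_\Phi\circ\psi$ to force $\psi$ to have degree $1$. You are in fact more careful than the paper on two points it leaves implicit --- why the degree-$1$-over-$Y$ alternative forces $\sigma\in\pi^*H^0(Y,V)$ (you invoke normality of $Y$), and why the image $\overline C$ exhausts $C_\Phi$ birationally (your length argument on the generic fibre) --- and you rightly flag the $\rank V>1$ case as needing extra attention, which the paper does not address.
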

\begin{proof}
    We can construct a map $\psi:X \to C_\Phi\subseteq \Tot(L)$ as $\psi(x):=(\pi(x), \sigma(x))$. (Since $\sigma$ is a section of $\pi^*L$, $\sigma(x)$ naturally belongs to the fibre of $\pi(x)$ in $\Tot(V)$.) Furthermore, composing $\Psi$ with the natural projection of the spectral cover $C_\Phi$ is exactly $\pi$. Since $\pi$ is a finite map of prime degree, then $\psi$ has degree $r$ or $1$. If $\sigma$ is not a pullback section, then for a general choice of $y \in Y$ we will have $\sigma(\pi^{-1}(y))$ containing $r$ distinct points. This shows that we must have $\psi$ of degree 1, so that it is a birational morphism.
\end{proof}

The next example shows that we cannot expect the above claim to hold when $r$ is composite.
\begin{ex}
    Again, suppose that $X$ and $Y$ are both $\mb{P}^1$ and $\pi$ is the map $[x:y]\mapsto [x^4:y^4]$. If we take $V=\mc{O}(d)$ with $d\geq 1$, then any polynomial in $H^0(X,\pi^*V)/\pi^*H^0(Y,V)$ can be written as $\sigma=x^3yf(x^4,y^4)+x^2y^2g(x^4,y^4)+xy^3h(x^4,y^4)$ using the fact that $\pi$ is Galois with group $\Z/4\Z$. Notice that $\pi$ decomposes as $\pi=p\circ p$, where $p:\mb{P}^1\to \mb{P}^1$ is the map $[x:y]\mapsto [x^2:y^2]$ corresponding to the intermediate Galois cover of order 2. If $f$ and $h$ are zero in $\sigma$, then $\sigma$ belongs to $p^*H^0(p(X),p^*V)$. Let $\Psi$ be the Higgs field given by pushing forward multiplication by $\sigma$ along $p$. Since $\sigma \in p^*H^0(p(X),p^*V)$, Example \ref{pullback} tells us that $\Psi=\tau\otimes \id_{p_*M}$, where $\tau \in H^0(p(X,p^*V)$ is the unique section with $p^*\tau=\sigma$. We are now in precisely the situation of Proposition \ref{gen-double} with the map $p:\mb{P}^1\to \mb{P}^1$, the section $\tau\in H^0(P(X),p^*V)/p^*H^0(Y,V)$, and a vector bundle $p_*M$ on $P(X)$, so that $\Phi$ has annihilating polynomial of degree two and the natural map $X\to C_\Phi$ is the composition of the normalization of $C_\Phi$ with $p$.
\end{ex}
\section{Result}
\begin{theorem}\label{Normalization}
    Let $\pi:X\to Z$ be a finite surjective map of smooth projective varieties, $M$ a torsion-free sheaf on $X$, $V$ a locally free sheaf on $Z$, and $\sigma$ a section of $H^0(X,\pi^*V)$. Let $\Phi:\pi_*M\to \pi_*M\otimes L$ be the Higgs field which is the pushforward by $\pi$ of the map $\underline{\sigma}:M\to M\otimes \pi^*V$ given by $m\mapsto m\otimes \sigma$. Then there is a normal variety $Y$ and two finite maps $p:X\to Y$ and $q:Y\to Z$ so that $\sigma=p^*\tau$ for some section $\tau\in H^0(Y,q^*V)$, $\Phi$ can be constructed as the pushforward by $q$ of the morphism $\underline{\tau}:p_*M\to p_*M\otimes q^*V$ given by $m\mapsto m\otimes \tau$, and the spectral cover $C_\Phi$ has normalization isomorphic to $Y$. 
\end{theorem}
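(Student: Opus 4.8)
The plan is to take $Y$ to be the normalization of the induced spectral cover $C_\Phi$ itself; then $p$ will come from the tautological evaluation map and $q$ from the restriction of the bundle projection. Concretely, I would first form the evaluation morphism $\psi\colon X\to\Tot(V)$, $\psi(x)=(\pi(x),\sigma(x))$, which is the $Z$-morphism classified by the section $\sigma\in H^0(X,\pi^*V)$; it satisfies $p_V\circ\psi=\pi$, where $p_V\colon\Tot(V)\to Z$ is the projection, and $\psi^*\eta=\sigma$ for $\eta$ the tautological section of $p_V^*V$. Since $\pi$ is finite and $p_V$ is separated, $\psi$ is proper, and its fibres are contained in those of $\pi$, so $\psi$ is quasi-finite, hence finite. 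Over the dense open $U\subseteq Z$ where $\pi$ is \'etale --- such a $U$ exists because $K(X)/K(Z)$ is separable in characteristic $0$ --- the bundle $\pi_*M$ restricts to the direct sum of its sheets and $\Phi$ to the fibrewise-diagonal operator whose block over $x\in\pi^{-1}(z)$ is multiplication by $\sigma(x)$; in particular the fibre of $C_\Phi$ over $z\in U$ is supported exactly on the points $\{\sigma(x):x\in\pi^{-1}(z)\}$.

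From this description, $\psi^*f_\Phi$ vanishes on the dense set $\pi^{-1}(U)$, hence on all of the reduced scheme $X$, so $\psi$ factors through $C_\Phi$; and the induced map $\psi\colon X\to C_\Phi$ is surjective onto $C_\Phi|_U$. Because $C_\Phi\to Z$ is finite locally free (Section~2), going-down forces every irreducible component of $C_\Phi$ to dominate $Z$, so $C_\Phi|_U$ is dense in $C_\Phi$; since $\psi$ is closed, $\psi(X)=C_\Phi$. As $X$ is irreducible, $C_\Phi$ is irreducible, and $(C_\Phi)_{\mathrm{red}}$ is an integral projective variety. Now let $\mu\colon Y\to(C_\Phi)_{\mathrm{red}}$ be its normalization and $q\colon Y\to Z$ the composite $Y\xrightarrow{\mu}(C_\Phi)_{\mathrm{red}}\hookrightarrow C_\Phi\xrightarrow{p_V}Z$; then $Y$ is normal and, being finite over the projective $Z$, projective, and by definition $Y$ is the normalization of $C_\Phi$. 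The induced surjection $X\to(C_\Phi)_{\mathrm{red}}$ has normal source, so it lifts uniquely through $\mu$ to a finite morphism $p\colon X\to Y$ with $q\circ p=\pi$. Setting $\tau\in H^0(Y,q^*V)$ to be the pullback along $Y\to C_\Phi\hookrightarrow\Tot(V)$ of the tautological section $\eta$, the identities $(Y\to\Tot(V))\circ p=\psi$ and $\psi^*\eta=\sigma$ give $p^*\tau=\sigma$.

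It remains to match the Higgs fields. Since $\underline{\sigma}=\underline{p^*\tau}\colon M\to M\otimes p^*q^*V$ is built from the section $p^*\tau$ of the pullback bundle $p^*q^*V$, the projection formula $p_*(M\otimes p^*q^*V)\cong p_*M\otimes q^*V$ --- natural in the locally free sheaf $q^*V$ --- identifies $p_*\underline{\sigma}$ with $\underline{\tau}\colon p_*M\to p_*M\otimes q^*V$; applying $q_*$ and using $q_*p_*M=\pi_*M$ yields $\Phi=\pi_*\underline{\sigma}=q_*\underline{\tau}$. Finally, the spectral cover of a $V$-twisted Higgs bundle depends only on the twisted endomorphism, so the spectral cover of $(q_*p_*M,\,q_*\underline{\tau})$ on $Z$ is again $C_\Phi$, whose normalization we have arranged to be $Y$.

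The step I expect to require the most care is the middle one: showing that $C_\Phi$ is \emph{irreducible} with integral reduction having normalization $Y$, rather than merely that $Y$ normalizes one component of $C_\Phi$. This rests on two points --- the fibrewise description of $\Phi$ over the \'etale locus (so that the support of $C_\Phi|_U$ is precisely the image of $\psi$), and the flatness of $C_\Phi\to Z$ (so that no component of $C_\Phi$ is vertical) --- after which irreducibility is forced by that of $X$. Everything else is formal: finiteness via ``proper plus quasi-finite'', the universal property of normalization of an integral scheme for dominant maps from normal sources, and naturality of the projection formula.
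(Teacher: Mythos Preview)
Your proposal is correct and follows exactly the paper's approach: form the evaluation map $\psi\colon X\to C_\Phi\subset\Tot(V)$, take $Y$ to be the normalization of $C_\Phi$, lift $\psi$ through the normalization using that $X$ is normal, and read off $\tau$ as the pullback of the tautological section. In fact you supply more justification than the paper does---the paper simply asserts that $\rho$ lands in $C_\Phi$, that the maps are finite, and that the lift $p$ exists, whereas you carefully argue finiteness, surjectivity onto $(C_\Phi)_{\mathrm{red}}$, and irreducibility of $C_\Phi$ via the \'etale locus and flatness of $C_\Phi\to Z$.
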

\begin{remark}
    In the above theorem, when $p$ is an isomorphism the spectral curve is birational to $X$, and when $q$ is an isomorphism, the resulting Higgs bundle has the form $\tau\otimes \id_{\pi_*M}$ for some $\tau \in H^0(Z,V)$. Of course, when both $p$ and $q$ are identity maps, then $\pi$ is an isomorphism
\end{remark}
\begin{proof}
    First, note that there are natural maps $\rho:X\to C_\Phi\subseteq \Tot(V)$ given by $\rho(x)=(\pi(x), \sigma(x))\in \Tot(V)$, which is well-defined since $\sigma$ is a section of $\pi^*V$ as well as $\pi_\Phi:C_\Phi\to Z$ defined as the restriction of the natural projection $\Tot(V)\to Z$ to $C_\Phi$. Both maps are finite, and satisfy $\pi_\Phi\circ \rho=\pi$. Let $\nu:Y\to C_\Phi$ be the normalization of $C_\Phi$. Then we have a map $p:X\to Y$ so that $\rho=\nu \circ p$ and a map $q:=\pi_\Phi\circ \nu:Y\to Z$ so that $\pi=q\circ p$. Notice that the section $\sigma$ is constant on fibres of $\rho$, and is therefore also constant on fibres of $p$, so we can construct a section $\tau \in H^0(Y,q^*V)$ so that $p^*\tau=\sigma$. Pushing forward $\underline{\sigma}$ by $p$ gives a map $\underline{\tau}:p_*M\to p_*M\otimes q^*V$ which is again multiplication by the section $\tau$, as described in Example \ref{pullback}. Clearly, pushing forward $\underline{\tau}$ by $q$ gives the Higgs field $\Phi$.
\end{proof}
\begin{remark}
    By considering the case where $q$ is the identity in the above theorem, one can check that the only nilpotent $V$-twisted Higgs bundles resulting from this construction are those with zero Higgs field.
\end{remark}
\begin{corollary}
    Let $\pi:X\to Y$ be a branched covering of prime degree, and let $\Phi$ be the Higgs field induced by the data $(\pi, M, V, \sigma)$. Then either $\Phi=\tau\otimes \id_{\pi_*M}$ for some $\tau \in H^0(Y, V)$ or $X$ is the normalization of $C_\Phi$.
\end{corollary}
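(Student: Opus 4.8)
The plan is to obtain the dichotomy as a direct bookkeeping consequence of Theorem~\ref{Normalization}. The data $(\pi,M,V,\sigma)$ satisfies the hypotheses of that theorem with $Z=Y$, so I would first invoke it to produce a normal variety $W$, finite surjective maps $p\colon X\to W$ and $q\colon W\to Y$ with $\pi=q\circ p$, a section $\tau\in H^0(W,q^*V)$ with $\sigma=p^*\tau$, an identification of $\Phi$ with the pushforward along $q$ of multiplication by $\tau$ on $p_*M$, and an isomorphism of $W$ with the normalization of $C_\Phi$. (Surjectivity of $p$ and $q$ is automatic here: $\pi=q\circ p$ is surjective and finite, the varieties involved are irreducible of the same dimension, so $p(X)$ is a closed irreducible subset of $W$ of full dimension, hence all of $W$.)

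Next I would use that degrees of finite surjective morphisms multiply in towers, so that $r=\deg(\pi)=\deg(p)\cdot\deg(q)$. Since $r$ is prime, at least one of the two factors equals $1$, and I would split into the corresponding two cases. In each case the only nonformal ingredient is the elementary fact that a finite birational morphism onto a normal variety is an isomorphism: the structure sheaf of the source pushes forward to a finite subalgebra of the common function field that is integral over the base, hence coincides with the base structure sheaf by normality.

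If $\deg(q)=1$, then $q$ is a finite birational morphism onto the smooth---hence normal---variety $Y$, so $q$ is an isomorphism; identifying $W$ with $Y$ along $q$ turns $\tau$ into an element of $H^0(Y,V)$, turns $p$ into $\pi$, and exhibits $\Phi$ as multiplication by $\tau$ on $\pi_*M$, i.e.\ $\Phi=\tau\otimes\id_{\pi_*M}$, exactly as in Example~\ref{pullback}. If instead $\deg(p)=1$, then $p$ is a finite birational morphism from the smooth variety $X$ onto the normal variety $W$, hence again an isomorphism, so $X\cong W$ is the normalization of $C_\Phi$. The two cases overlap precisely when $\pi$ is an isomorphism, in which situation both conclusions hold vacuously. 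I do not anticipate a genuine obstacle: once Theorem~\ref{Normalization} is in hand, the corollary reduces to multiplicativity of degree, primality of $r$, and the standard normality argument above.
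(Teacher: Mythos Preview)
Your proposal is correct and follows exactly the route the paper intends: the paper states the corollary without an explicit proof, but the remark immediately preceding it spells out the two extreme cases ($p$ an isomorphism versus $q$ an isomorphism) of Theorem~\ref{Normalization}, and your argument simply observes that primality of $\deg(\pi)=\deg(p)\cdot\deg(q)$ forces one of these cases. Your added justification that a degree-one finite surjection onto a normal target is an isomorphism is the standard filling-in of what the paper leaves implicit.
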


\section{Stability}
Let $X$ and $Y$ be projective varieties and let $f:X\to Y$ be a finite map. Choose an ample line bundle $H$ on $Y$. In this case, $f^*H$ is also an ample line bundle, and $H^i(X,\mc{E}\otimes \pi^*H^k)\simeq H^i(Y,\pi_*\mc{E}\otimes H^k)$ for any torsion-free coherent sheaf $\mc{E}$ on $X$ and any integers $i$ and $k$. In particular, if we define the normalized Hilbert polynomial as$$p_{H,\mc{F}}(k):=\sum\limits_{i=0}^\infty (-1)^i\frac{h^i(Y,\mc{F}\otimes H^k)}{\rank(\mc{F})}$$
for a choice of ample line bundle and torsion-free sheaf, then we get the relation \begin{align}\label{finitePolynomial}p_{\pi^*H, \mc{E}}(k)=\deg(f)p_{H,\pi_*\mc{E}}(k)\end{align} for every integer $k$.

If we define the inequality $\prec (\mathrm{resp.} \preceq)$ on Hilbert polynomials as$$p_{H,\mc{F}}\prec(\mathrm{resp.} \preceq) p_{H,\mc{E}} \iff p_{H,\mc{F}}< (\mathrm{resp.} \leq) p_{H,\mc{E}} \forall k>>0,$$
then it becomes clear that for any pair of torsion-free sheaves $\mc{E}, \mc{F}$ on X, $p_{\pi^*H,\mc{F}}\prec p_{\pi^*H,\mc{E}}$ if and only if $p_{H, \pi_*\mc{F}}\prec p_{H,\pi_*\mc{E}}$.

Recall the definition of Giesecker stability: given an ample line bundle $H$, A torsion-free sheaf $\mc{E}$ is \emph{Giesecker $H$-(semi-)stable} for any subsheaf $\mc{F}\subset \mc{E}$ with $0<\rank(\mc{F})<\rank(\mc{E}),$ we have $p_{H,\mc{F}}\prec (\preceq) p_{H,\mc{F}}$.

\begin{proposition}\label{stability}
    Consider the context of Theorem \ref{Normalization}, and take $H \in NS(Z)$ to be an ample class. The $V$-twisted Higgs bundle $(\pi_*M,\Phi)$ defined by the data $(\pi,M,V,\sigma)$ is $H$-(semi-)stable if and only if the sheaf $\rho_*M$ is $\pi_\Phi^*H$-(semi-)stable.
\end{proposition}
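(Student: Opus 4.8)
The plan is to transport everything through the finite morphism $\pi_\Phi\colon C_\Phi\to Z$, so that the proposition follows from the elementary behaviour of normalized Hilbert polynomials under finite pushforward already recorded in \eqref{finitePolynomial}. There are two ingredients: a dictionary identifying $\Phi$-invariant subsheaves of $\pi_*M$ with \emph{all} coherent subsheaves of $\rho_*M$ on $C_\Phi$, and the observation that this dictionary rescales every normalized Hilbert polynomial (computed with respect to $\pi_\Phi^*H$, which is ample because $\pi_\Phi$ is finite) by one and the same constant.

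For the dictionary: since $\pi_\Phi\circ\rho=\pi$, transitivity of pushforward gives $(\pi_\Phi)_*(\rho_*M)=\pi_*M$; and since $\rho^*\eta_{C_\Phi}=\sigma$ by the very definition of $\rho$, the map $\Phi=\pi_*\underline{\sigma}$ is the $(\pi_\Phi)_*$ of multiplication by the tautological section $\eta_{C_\Phi}$ on $\rho_*M$ (this is the spectral-correspondence description of $\Phi$, cf.\ Example~\ref{pullback} applied to $\rho$ in place of $\pi$). As $\pi_\Phi$ is affine, $(\pi_\Phi)_*$ identifies coherent $\mc{O}_{C_\Phi}$-modules with coherent $(\pi_\Phi)_*\mc{O}_{C_\Phi}$-modules on $Z$, exactly and faithfully; under this identification an $\mc{O}_Z$-subsheaf $F\subseteq\pi_*M$ is a $(\pi_\Phi)_*\mc{O}_{C_\Phi}$-submodule precisely when it is stable under the action of the tautological section, i.e.\ when $\Phi(F)\subseteq F\otimes V$. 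Hence $\mc{F}\mapsto(\pi_\Phi)_*\mc{F}$ is a bijection, preserving inclusions and strict inclusions, between coherent subsheaves of $\rho_*M$ and $\Phi$-invariant coherent subsheaves of $\pi_*M$.

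For the rescaling: $\pi_\Phi$ being finite and hence affine, $H^i(C_\Phi,\mc{F}\otimes\pi_\Phi^*H^{\otimes k})\cong H^i(Z,(\pi_\Phi)_*\mc{F}\otimes H^{\otimes k})$ for all $i,k$ and all coherent $\mc{F}$ on $C_\Phi$; together with $\rank_Z((\pi_\Phi)_*\mc{F})=\deg(\pi_\Phi)\cdot\rank_{C_\Phi}(\mc{F})$ this yields $p_{\pi_\Phi^*H,\mc{F}}=\deg(\pi_\Phi)\,p_{H,(\pi_\Phi)_*\mc{F}}$, exactly parallel to \eqref{finitePolynomial}. In particular the bijection of the previous paragraph matches the rank bounds $0<\rank(\mc{F})<\rank(\rho_*M)$ with $0<\rank(F)<\rank(\pi_*M)$ and preserves the orderings $\prec$, $\preceq$. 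Combining the two ingredients: $\rho_*M$ is Giesecker $\pi_\Phi^*H$-(semi)stable iff every coherent subsheaf $\mc{F}$ of intermediate rank satisfies $p_{\pi_\Phi^*H,\mc{F}}\prec(\preceq)p_{\pi_\Phi^*H,\rho_*M}$, iff every $\Phi$-invariant subsheaf $F=(\pi_\Phi)_*\mc{F}$ of intermediate rank satisfies $p_{H,F}\prec(\preceq)p_{H,\pi_*M}$, which is exactly $H$-(semi)stability of $(\pi_*M,\Phi)$.

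The point requiring care is that $C_\Phi$ need not be reduced: it is irreducible, being the image $\rho(X)$ of the irreducible $X$ under a finite map, but when $\rho$ fails to be birational its defining polynomial $f_\Phi$ is a proper power and $C_\Phi$ is generically non-reduced. One must therefore read $\rank_{C_\Phi}$ and ``Giesecker $\pi_\Phi^*H$-stable'' on $C_\Phi$ with the conventions forced by the above, namely $\rank_{C_\Phi}(\mc{F}):=\rank_Z((\pi_\Phi)_*\mc{F})/\deg(\pi_\Phi)$; equivalently, since $\rho$ factors through the normalization $\nu\colon Y\to C_\Phi$ with $\rho_*M=\nu_*(p_*M)$, one runs the identical argument over $(C_\Phi)_{\mathrm{red}}$. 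I expect this bookkeeping — not any geometric input — to be the only real obstacle, the heart of the argument being the standard transport of stability along a finite morphism.
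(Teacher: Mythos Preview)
Your argument is correct and is essentially the same as the paper's: both identify $\Phi$-invariant subsheaves of $\pi_*M$ with $(\pi_\Phi)_*\mc{O}_{C_\Phi}$-submodules via the equivalence of categories induced by the finite map $\pi_\Phi$, and then invoke \eqref{finitePolynomial} to see that this equivalence preserves Giesecker (semi-)stability. The paper's proof is considerably more terse, simply asserting the categorical equivalence and that $(\pi_*M,\Phi)$ is naturally a $(\pi_\Phi)_*\mc{O}_{C_\Phi}$-module; your version spells out the bijection on subsheaves and the rescaling of normalized Hilbert polynomials explicitly, and additionally flags the bookkeeping issue when $C_\Phi$ is non-reduced, which the paper does not address.
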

\begin{proof}
    Recall that any finite map $f:V\to W$ of varieties induces an equivalence of categories between coherent $\mc{O}_W$ modules and coherent $f_*\mc{O}_V$ modules, and it is clear from the equation \eqref{finitePolynomial} this equivalence of categories preserves Giesecker stability for appropriately ample line bundles. 

    In the context of Theorem \ref{Normalization}, the $V$-twisted Higgs bundle $(\pi_*M, \Phi)$ is naturally a $(f_\Phi)_*\mc{O}_{C_\Phi}$ module, so it is $H$-stable if and only if $\rho_*M$ is $f_\Phi^*H$-stable.
\end{proof}

In the case that $\pi:X\to Z$ has degree $2$, we can more directly relate $H$-stability of the $V$-twisted Higgs bundle to the starting data.

\begin{proposition}\label{doublecoverstability}
    Let $\pi:X\to Z$ be a finite surjective map of degree 2 between smooth projective varieties, and fix an ample class $H \in NS(Z)$. If we take $M$ to be a vector bundle on $X$ and $V$ a vector bundle on $Z$ and choose a section $\sigma \in H^0(X,\pi^*V)\setminus \pi^*H^0(Z,V)$, then the $V$-twisted Higgs bundle defined by $(\pi,M,V,\sigma)$ is $H$-(semi-)stable if and only if $M$ is $\pi^*H$-(semi)-stable.
\end{proposition}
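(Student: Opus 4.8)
The plan is to reduce to Proposition~\ref{stability} and then study the map $\rho:X\to C_\Phi$, $x\mapsto(\pi(x),\sigma(x))$, of Theorem~\ref{Normalization}. By Proposition~\ref{stability}, $(\pi_*M,\Phi)$ is $H$-(semi-)stable if and only if $\rho_*M$ is $\pi_\Phi^*H$-(semi-)stable on $C_\Phi$. Since $\deg\pi=2$ is prime and $\sigma\notin\pi^*H^0(Z,V)$, Proposition~\ref{gen-double} shows that $C_\Phi$ is integral and that $\rho$ realizes $X$ as its normalization; in particular $\rho$ is finite and birational, and $\pi=\pi_\Phi\circ\rho$ forces $\rho^*(\pi_\Phi^*H)=\pi^*H$. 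So it suffices to prove the following general statement, applied with $C=C_\Phi$ and $A=\pi_\Phi^*H$: \emph{if $\rho:X\to C$ is a finite birational morphism onto an integral variety and $A$ is ample on $C$, then a torsion-free sheaf $M$ on $X$ is $\rho^*A$-(semi-)stable if and only if $\rho_*M$ is $A$-(semi-)stable.}

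The formal inputs are that $\rho$ is affine (so $\rho_*$ is exact), that $\rho_*$ preserves torsion-freeness (using that $\rho$ is dominant and $C$ is integral) and rank (since $\deg\rho=1$), and that $p_{\rho^*A,\mc{N}}=p_{A,\rho_*\mc{N}}$ for every torsion-free $\mc{N}$ on $X$, which is exactly \eqref{finitePolynomial} with $\deg(\rho)=1$. The forward direction is then immediate: a subsheaf $\mc{F}\subsetneq M$ with $0<\rank\mc{F}<\rank M$ violating (semi-)stability pushes forward to $\rho_*\mc{F}\subsetneq\rho_*M$ of the same rank and the same normalized Hilbert polynomial, hence violating (semi-)stability of $\rho_*M$. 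For the converse, suppose $\mc{G}\subseteq\rho_*M$ with $0<\rank\mc{G}<\rank M$ violates (semi-)stability of $\rho_*M$. Enlarge $\mc{G}$ to the sub-$\rho_*\mc{O}_X$-module $\widetilde{\mc{G}}\subseteq\rho_*M$ that it generates, i.e.\ the image of $\rho_*\mc{O}_X\otimes_{\mc{O}_C}\mc{G}\to\rho_*M$. Over the dense open $U\subseteq C$ on which $\rho$ is an isomorphism one has $(\rho_*\mc{O}_X)|_U=\mc{O}_U$, so $\widetilde{\mc{G}}|_U=\mc{G}|_U$; therefore $\rank\widetilde{\mc{G}}=\rank\mc{G}$, and since $\widetilde{\mc{G}}/\mc{G}$ is supported on the lower-dimensional set $C\setminus U$ we get $p_{A,\mc{G}}\preceq p_{A,\widetilde{\mc{G}}}$. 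By the equivalence between $\Coh(X)$ and the category of $\rho_*\mc{O}_X$-modules on $C$, $\widetilde{\mc{G}}=\rho_*\mc{N}$ for a unique torsion-free subsheaf $\mc{N}\subseteq M$ with $\rank\mc{N}=\rank\mc{G}$; combining the two displayed facts with $p_{\rho^*A,\mc{N}}=p_{A,\widetilde{\mc{G}}}$ shows that $\mc{N}$ violates (semi-)stability of $M$. Taking contrapositives in each direction gives the equivalence, uniformly for the stable and the semistable notion.

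The one genuinely delicate point is the converse direction: a destabilizing subsheaf of $\rho_*M$ lives on the possibly singular variety $C_\Phi$ and need not be a $\rho_*\mc{O}_X$-submodule, so it does not visibly descend to $X$. The remedy is to replace it by the $\rho_*\mc{O}_X$-submodule it generates, and the point that makes this harmless is that the enlargement is invisible over the locus where $\rho$ is an isomorphism, hence cannot alter the rank and can only raise the normalized Hilbert polynomial. Everything else is bookkeeping with \eqref{finitePolynomial} together with the cited Propositions~\ref{stability} and~\ref{gen-double}.
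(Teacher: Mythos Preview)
Your argument is correct, but it takes a genuinely different route from the paper's.

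The paper avoids the singular spectral cover $C_\Phi$ altogether. It introduces the Tschirnhausen line bundle $\lambda$ and the canonical odd section $s\in H^0(X,\pi^*\lambda)$, and forms the auxiliary $\lambda$-twisted Higgs field $\Psi$ from the data $(\pi,M,\lambda,s)$. The spectral cover of $\Psi$ is precisely the smooth embedding $X\hookrightarrow\Tot(\lambda)$. Writing $\sigma=\pi^*f+s\otimes\pi^*g$ with $g\neq 0$, one has $\Phi=f\otimes\id_{\pi_*M}+g\otimes\Psi$, so $\Phi$ and $\Psi$ share the same invariant subsheaves; applying Proposition~\ref{stability} to $\Psi$ (whose $\rho$ is now an isomorphism onto $X$) gives the result immediately.

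Your approach instead applies Proposition~\ref{stability} directly to $\Phi$ and then proves the general lemma that for a finite birational $\rho:X\to C$ onto an integral variety, Gieseker (semi-)stability of $M$ with respect to $\rho^*A$ is equivalent to that of $\rho_*M$ with respect to $A$. The nontrivial step---enlarging a destabilizing $\mc{O}_C$-subsheaf $\mc{G}\subseteq\rho_*M$ to the $\rho_*\mc{O}_X$-submodule it generates, at the cost only of a torsion quotient supported on the non-isomorphism locus---is exactly the right idea, and it goes through as you describe.

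What each approach buys: the paper's trick is short and sidesteps singularities entirely, but is specific to degree~$2$ (it relies on the explicit Tschirnhausen decomposition and the linear relation between $\Phi$ and $\Psi$). Your argument is longer but proves something strictly stronger: your general lemma is precisely the converse of Corollary~\ref{stability-normalization}, so in effect you have answered the paper's Question~3 in the negative. It would be worth isolating that lemma as a standalone statement.
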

\begin{proof}
    Since $\pi$ has degree 2, there is a unique line bundle $\lambda \in \Pic(Z)$ so that $\pi_*\mc{O}_X\simeq \mc{O}_Z\oplus \lambda^{-1}$. There is also a natural involution $\iota:X\to X$ induced by $\pi$ which swaps sheets of the double cover. Let $s \in H^0(X,\pi^*\lambda)$ be a non-zero section so that $\iota^*s=-s$. Using the decomposition $H^0(X,\pi^*V)\simeq H^0(Z,V)\oplus H^0(Z,V\otimes \lambda^{-1})$, we can uniquely decompose $\sigma$ as $\sigma=\pi^*f+s\otimes\pi^*g$, where $f \in H^0(Z,V)$, $g\in H^0(Z,V\otimes \lambda^{-1})$, and $g\neq0$. Let $\Psi$ be the $\lambda$-twisted Higgs field corresponding to the data $(\pi, M, \lambda, s)$. It is easy to check that the spectral cover of $\Psi$ is exactly the embedding of $X$ into $\Tot(\lambda)$ induced by $s$. Using the above decomposition of $\sigma$, we can write the $V$-twisted Higgs field $\Phi$ induced by $(\pi,M,V,\sigma)$ as $\Phi=f\otimes \id_{\pi_*M}+g\otimes \Psi$. Since $\Phi$ and $\Psi$ have the same invariant subsheaves, $\Psi$ is $H$-(semi-)stable if and only if $\Phi$ is. Furthermore, applying Proposition \ref{stability} to $\Psi$ gives that $\Psi$ is $H$-(semi-)stable if and only if $M$ is $\pi^*H$-(semi-)stable, and so combining the two equivalences gives the desired result.
\end{proof}

\subsection{Pushforward from the normalization}

Let $X$ be a projective variety with smooth normalization $\nu:\tilde{X}\to X$. We set $\tilde{\mc{O}}_X:=\nu_*\mc{O}_{\tilde{X}}$. Let $\mc{J}$ be the conductor ideal sheaf of $\nu$. (As a reminder, $\mc{J}$ is defined as being maximal among ideal sheaves of $\mc{O}_X$ which are also ideal sheaves of $\tilde{\mc{O}}_X$.)
\begin{proposition}\label{Otilde-algebras}
    Let $\mc{F}$ be a torsion-free coherent sheaf on $X$ which is also an $\tilde{\mc{O}}_X$-module. Then there is a unique coherent sheaf $\mc{E}$ on $\tilde{X}$ so that $\nu_*\mc{E}\simeq \mc{F}$.
\end{proposition}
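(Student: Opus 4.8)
The plan is to deduce the statement from the equivalence of categories attached to a finite morphism, applied to the normalization map itself. Since $X$ is a projective variety over $\C$, it is excellent, so the normalization $\nu\colon\tilde X\to X$ is a \emph{finite} morphism; in particular $\tilde X$ is the relative spectrum $\underline{\Spec}_X(\tilde{\mc O}_X)$, and $\tilde{\mc O}_X=\nu_*\mc O_{\tilde X}$ is itself a coherent sheaf of $\mc O_X$-algebras (because $X$ is Noetherian and $\nu$ is finite). Recall (cf.\ the proof of Proposition~\ref{stability}) that a finite morphism $f\colon V\to W$ of Noetherian schemes induces an equivalence of categories between coherent sheaves on $V$ and coherent $f_*\mc O_V$-modules on $W$, with quasi-inverse the pushforward $f_*$. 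Applying this with $f=\nu$, it suffices to check that the hypotheses on $\mc F$ exhibit it as a coherent $\tilde{\mc O}_X$-module.

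That verification is immediate: by assumption $\mc F$ is an $\tilde{\mc O}_X$-module, and being finitely generated over the smaller sheaf of rings $\mc O_X$ it is a fortiori finitely generated over $\tilde{\mc O}_X$; since $X$ is Noetherian (and $\tilde{\mc O}_X$ is a coherent $\mc O_X$-algebra) this makes $\mc F$ coherent as an $\tilde{\mc O}_X$-module. Applying the equivalence, there is a coherent sheaf $\mc E$ on $\tilde X$ with $\nu_*\mc E\simeq\mc F$, and its uniqueness up to canonical isomorphism follows from full faithfulness of $\nu_*$ on coherent sheaves. For readers who prefer a hands-on argument, the same conclusion is visible affine-locally: over an open $\Spec A\subseteq X$ the sheaf $\tilde{\mc O}_X$ corresponds to the integral closure $\tilde A$ of $A$ in its total ring of fractions (a finite $A$-algebra), and $\mc F$ corresponds to a finitely generated $A$-module $N$ carrying a compatible $\tilde A$-module structure, whence $N$ is automatically finitely generated over $\tilde A$; these local modules glue to $\mc E$ precisely because the gluing is forced by uniqueness.

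The one point that genuinely uses the finiteness of $\nu$ — and the place I expect to have to be careful — is that $\mc E$ comes out \emph{coherent} rather than merely quasi-coherent: the purely formal statement for affine morphisms only yields an equivalence onto quasi-coherent $\tilde{\mc O}_X$-modules, and upgrading coherence of $\mc F$ to coherence of $\mc E$ is exactly where finiteness of the integral closure is needed. Finally, I would remark that torsion-freeness of $\mc F$ plays no role in the existence or uniqueness of $\mc E$ (though it does descend, since $\tilde A$ is finite over the domain $A$ componentwise); the hypothesis is presumably retained for compatibility with the subsequent results, where the conductor ideal $\mc J$ comes into play.
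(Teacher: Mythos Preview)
Your argument is correct, but it takes a genuinely different route from the paper's, and your closing remark about torsion-freeness misreads what the paper is doing with that hypothesis.

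The paper does not invoke the abstract equivalence of categories. For existence it writes down the explicit candidate $\mc{E}=\nu^*\mc{F}/\Tors(\nu^*\mc{F})$ and checks by hand, using the conductor $\mc{J}$, that $\nu_*\mc{E}\simeq\mc{F}$. For uniqueness it proves something sharper than what you state: it shows that \emph{any} $\mc{O}_X(U)$-linear isomorphism between torsion-free $\tilde{\mc{O}}_X(U)$-modules is automatically $\tilde{\mc{O}}_X(U)$-linear. The trick is that for $a\in\tilde{\mc{O}}_X(U)$ and $m\in\mc{J}(U)$ one has $m\cdot\psi(af)=\psi((ma)f)=(ma)\cdot\psi(f)=m\cdot(a\psi(f))$, and one may cancel $m$ precisely because the target is torsion-free. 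In particular the $\tilde{\mc{O}}_X$-module structure on a torsion-free $\mc{F}$, if it exists, is uniquely determined by the underlying $\mc{O}_X$-module.

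Your equivalence-of-categories argument only yields uniqueness of $\mc{E}$ given $\mc{F}$ \emph{together with} its $\tilde{\mc{O}}_X$-action; the paper is asserting that $\mc{E}$ is already determined by $\mc{F}$ as an $\mc{O}_X$-module. That stronger reading genuinely needs torsion-freeness: a skyscraper at a singular point of $X$ can carry non-isomorphic $\tilde{\mc{O}}_X$-structures (support it at different preimages under $\nu$), giving non-isomorphic $\mc{E}$'s with the same $\nu_*\mc{E}$ as $\mc{O}_X$-modules. So your claim that ``torsion-freeness plays no role in uniqueness'' is true for your weaker uniqueness but false for the paper's, and the conductor is exactly where this is used. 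Your approach is cleaner for existence; the paper's buys the finer uniqueness statement and the explicit formula for $\mc{E}$.
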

This is a generalization of \cite[Chapter 8, Proposition 10]{Seshadri}.
\begin{proof}
    Let $U$ be any affine open subset of $X$. We first show that any isomorphism $\psi:\mc{F}_1\to \mc{F}_2$ between torsion-free $\mc{O}_X(U)$-modules is an $\tilde{\mc{O}}_X(U)$-module isomorphism. For any $f \in \mc{F}_1, a\in \tilde{\mc{O}}_X(U), m \in \mc{J}(U)$, consider $\psi(m\cdot a\cdot f)$. By definition of the conductor, $m\cdot a \in \mc{O}_X(U)$, so $m\cdot\psi(a\cdot f)=\psi(m\cdot a \cdot f)=m\cdot a \cdot \psi(f)$. Since $\tilde{\mc{O}}_X(U)$ is an integral domain and $\mc{F}_2$ is torsion-free, this implies that $\psi(a\cdot f)=a\cdot \psi(f)$, implying that $\psi$ is an isomorphism of $\tilde{\mc{O}}_X(U)$-modules. This shows that any $\mc{E}$ as in the statement will be unique up to unique isomorphism. We choose the candidate $\mc{E}\simeq \nu^*\mc{F}/\mathrm{Tors}(\nu^*\mc{F})$. For any affine open subset $U$ of $X$, $\mc{E}(\nu^{-1}(U))$ is given by $\mc{F}(U)\otimes_{\mc{O}_X(U)} \tilde{\mc{O}}_X(U)/\mathrm{Tors}$. For any $f \in \mc{F}(U)$ and $a\in \tilde{\mc{O}}_X(U)$, $f\otimes a=(a\cdot f)\otimes 1+[f\otimes a -(a\cdot f)\otimes 1]$, and for any $m \in J(U)$, $$m\cdot (f\otimes a)=(m\cdot a\cdot f)\otimes 1+[f\otimes (m\cdot a) -(m\cdot a\cdot f)\otimes 1]=(m\cdot a\cdot f)\otimes 1$$
    since $m\cdot a \in \mc{O}_X(U)$. This clearly shows that the torsion-free part of $\nu^*\mc{F}(\nu^{-1}(U))$ is exactly $\mc{F}(U)$, so that $\nu_*\mc{E}\simeq \mc{F}$.
\end{proof}
\begin{corollary}\label{stability-normalization}
    Consider the context of Proposition \ref{Otilde-algebras}, and suppose that $H$ is an ample line bundle on $X$. If $\mc{F}$ is a torsion-free coherent sheaf on $\tilde{X}$ and $\nu_*\mc{F}$ is Giesecker $H$-(semi-)stable, then $\mc{F}$ is Giesecker $\nu^*H$-(semi-)stable.
\end{corollary}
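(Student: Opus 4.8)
The plan is to argue by contraposition: assuming $\mc{F}$ fails to be Giesecker $\nu^*H$-(semi-)stable, I will produce a subsheaf of $\nu_*\mc{F}$ that witnesses the failure of Giesecker $H$-(semi-)stability for $\nu_*\mc{F}$. Concretely, suppose there is a subsheaf $\mc{G}\subset \mc{F}$ with $0<\rank(\mc{G})<\rank(\mc{F})$ and $p_{\nu^*H,\mc{G}}\succeq p_{\nu^*H,\mc{F}}$ (respectively $\succ$, in the semistable case). Since $\nu$ is finite, hence affine, the functor $\nu_*$ is exact, so the inclusion $\mc{G}\hookrightarrow \mc{F}$ pushes forward to an inclusion $\nu_*\mc{G}\hookrightarrow \nu_*\mc{F}$, and the claim is that $\nu_*\mc{G}$ destabilizes $\nu_*\mc{F}$.

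The verification splits into three routine points. First, $\mc{G}$ is torsion-free as a subsheaf of the torsion-free sheaf $\mc{F}$, and then $\nu_*\mc{G}$ is torsion-free on $X$: a local section of $\nu_*\mc{G}$ killed by a nonzero function on $X$ would, viewed on $\tilde{X}$, be killed by its pullback, which is nonzero since $\nu$ is dominant, contradicting torsion-freeness of $\mc{G}$; in particular $p_{H,\nu_*\mc{G}}$ is defined. Second, $\nu$ is birational, so $\deg(\nu)=1$ and hence $\rank(\nu_*\mc{G})=\rank(\mc{G})$ and $\rank(\nu_*\mc{F})=\rank(\mc{F})$; thus $0<\rank(\nu_*\mc{G})<\rank(\nu_*\mc{F})$ and $\nu_*\mc{G}$ is an admissible test subsheaf. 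Third, applying the relation \eqref{finitePolynomial} to $\nu$ with $\deg(\nu)=1$ yields $p_{\nu^*H,\mc{G}}=p_{H,\nu_*\mc{G}}$ and $p_{\nu^*H,\mc{F}}=p_{H,\nu_*\mc{F}}$; the cohomological input $H^i(\tilde{X},\mc{G}\otimes \nu^*H^k)\simeq H^i(X,\nu_*\mc{G}\otimes H^k)$ is exactly the finiteness-plus-projection-formula statement recorded at the start of this section. Combining, $p_{H,\nu_*\mc{G}}\succeq p_{H,\nu_*\mc{F}}$ (respectively $\succ$), contradicting the assumed (semi-)stability of $\nu_*\mc{F}$.

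I do not expect a serious obstacle here; the only points needing a little care are checking that the pushforward of the destabilizing subsheaf is again torsion-free of the same rank, so that it is a legitimate competitor in the Giesecker inequality, and recalling that $\nu_*$ preserves injections. It is worth remarking that, unlike a hypothetical converse implication, this direction uses neither the conductor ideal $\mc{J}$ nor Proposition \ref{Otilde-algebras}: the subsheaf produced above need not be an $\tilde{\mc{O}}_X$-submodule of $\nu_*\mc{F}$, so the category equivalence underlying Proposition \ref{stability} does not by itself give the statement, and it is precisely the presence of $\mc{O}_X$-subsheaves that are not $\tilde{\mc{O}}_X$-subsheaves that makes the reverse implication fail in general.
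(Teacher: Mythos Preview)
Your argument is correct. The paper does not supply an explicit proof of this corollary, but the intended argument --- given its placement immediately after Proposition~\ref{Otilde-algebras} and the parallel with Proposition~\ref{stability} --- is precisely the one you give: a destabilizing subsheaf $\mc{G}\subset\mc{F}$ on $\tilde{X}$ pushes forward to a destabilizing $\mc{O}_X$-subsheaf $\nu_*\mc{G}\subset\nu_*\mc{F}$, with ranks and normalized Hilbert polynomials preserved by \eqref{finitePolynomial} since $\deg(\nu)=1$.

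Your closing remark is also on point and worth keeping: this direction really does not require Proposition~\ref{Otilde-algebras} or the conductor ideal, because $\nu_*\mc{G}$ is automatically an $\tilde{\mc{O}}_X$-submodule and hence a fortiori an $\mc{O}_X$-submodule. The proposition becomes relevant only when one tries to go the other way, since an arbitrary destabilizing $\mc{O}_X$-subsheaf of $\nu_*\mc{F}$ need not come from $\tilde{X}$ --- which is exactly why the converse can fail and why the paper lists it as an open question.
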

The converse of this corollary holds when $\tilde{X}\simeq \mb{P}^1$ or when $X$ is a curve whose singular points are all ordinary double points \cite{Avritzer-Lange-Ribeiro,Avritzer-Ribeiro-Martins}.

\section{Questions}

We close this short article with a few research questions of interest that arise from the constructions above:

\begin{enumerate}    
    \item Can we place any restrictions on the type of singularities that $Y$ can have in the case of Theorem \ref{Normalization}? 
    
    \item Is there a ``nice'' characterization of the singularities that can occur in the spectral curves of Higgs bundles fitting the hypotheses of Proposition \ref{doublecoverstability}?

    \item Can we find counterexamples to the converse of Corollary \ref{stability-normalization}? 
    
\end{enumerate}

\bibliographystyle{alpha}
\bibliography{references}

\end{document}